\newcolumntype{C}[1]{>{\centering\arraybackslash$}p{#1}<{$}}
\def\<{\langle}
\def\>{\rangle}
\newtheorem{theorem}{Theorem}[section]
\newtheorem{lemma}[theorem]{Lemma}
\newtheorem{corollary}[theorem]{Corollary}
\newtheorem{example}[theorem]{Example}
\title{Geometric embeddings of braid groups do not merge conjugacy classes}
\author{Juan Gonz\'alez-Meneses\footnote{Partially supported by the Spanish research projects MTM2010-19355, P09-FQM-5112, FEDER and the Australian Research Council's Discovery Project DP1094072.}}
\date{July 16, 2013}
\begin{document}

\maketitle


\begin{abstract}
An embedding of the $m$-times punctured disc into the $n$-times punctured disc, for $n>m$, yields an embedding of the braid group on $m$ strands $B_m$ into the braid group on $n$ strands $B_n$, called a geometric embedding. The main example consists of adding $n-m$ trivial strands to the right of each braid on $m$ strands. We show that geometric embeddings do not merge conjugacy classes, meaning that if the images of two elements in $B_m$ by a geometric embedding are conjugate in $B_n$, the original elements are conjugate in $B_m$. We also show that the result does not hold, in general, for geometric embeddings of mapping class groups.
\end{abstract}

\section{Introduction}

Braid groups on $n$ strands $B_n$ were introduced by Artin~\cite{Artin1} and can be defined in several different ways. The easiest one is by means of the following presentation, also due to Artin~\cite{Artin}
$$
   B_n = \left\langle \sigma_1,\ldots,\sigma_{n-1}  \left| \begin{array}{ll} \sigma_i\sigma_j=\sigma_j\sigma_i & |i-j|>1 \\
   \sigma_i\sigma_j\sigma_i = \sigma_j \sigma_i \sigma_j & |i-j|=1 \end{array}  \right. \right\rangle
$$
If we see braids as homotopy classes of disjoint strands relative to their endpoints~\cite{Birman}, the generator $\sigma_i$ corresponds to the crossing of strands in positions $i$ and $i+1$, in the positive sense (see Figure~\ref{F:generators}).

\begin{figure}
\begin{center}
  \includegraphics{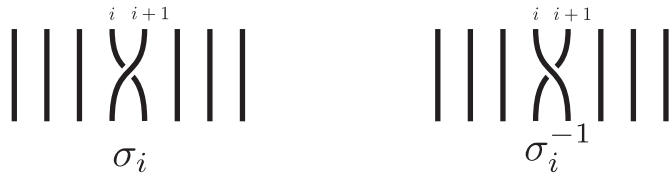}
\end{center}
\caption{Classical (Artin) generators of $B_n$}
\label{F:generators}
\end{figure}

The group $B_n$ can also be seen as the mapping class group of the $n$-times punctured disc, relative to the boundary. More precisely, if $\mathbb D_n$ is a topological disc with $n$ punctures ($n$ interior points removed), we can consider the set $Homeo^+(\mathbb D_n, \partial \mathbb D_n)$ of orientation preserving homeomorphisms from $\mathbb D_n$ to itself fixing the boundary pointwise. Endowing $Homeo^+(\mathbb D_n, \partial \mathbb D_n)$ with the usual compact-open topology, one defines the mapping class group $MCG(\mathbb D_n, \partial \mathbb D_n) = \pi_0(Homeo^+(\mathbb D_n, \partial \mathbb D_n))$. That is, $MCG(\mathbb D_n, \partial \mathbb D_n)$ is the set of orientation preserving homeomorphisms of $\mathbb D_n$ (fixing the boundary pointwise), up to isotopy. It turns out that $B_n\simeq MCG(\mathbb D_n,\partial \mathbb D_n)$~\cite{Birman}.

There is a natural way to embed $B_m$ into $B_n$ for $m<n$: Choose a topological embedding  $\varphi:\: \mathbb D_m \to \mathbb D_n$ which sends punctures to punctures and $\partial \mathbb D_m$ to the interior of $\mathbb D_n$, and then extend every homeomorphism $f$ of $\mathbb D_m$ fixing the boundary pointwise to a homeomorphism $\varphi(f)$ of $\mathbb D_n$ which equals $\varphi\circ f \circ \varphi^{-1}$ in $\varphi(\mathbb D_m)$ and is the identity outside $\varphi(\mathbb D_m)$. This map which sends $f$ to $\varphi(f)$ is injective and respects isotopies, so it induces an embedding $\varphi:\: B_m \to B_n$, which is called a {\it geometric embedding} (see~\cite{Wajnryb}).  Notice that we denoted three different maps (applied to points, to homeomorphisms and to mapping classes) with the same letter $\varphi$, hoping that this will not cause confusion.

The simplest example of a geometric embedding occurs when the disc embedding $\varphi$ is given by the inclusion, in the complex plane $\mathbb C$, of the $m$-times punctured disc of diameter $m-1/2$ whose punctures are the integers $1,\ldots,m$, into the $n$-times punctured disc of diameter $n$ whose punctures are the integers $1,\ldots,n$ (see Figure~\ref{F:eta}). We call this the {\it standard embedding} of $\mathbb D_m$ into $\mathbb D_n$, and will denote it by $\eta$. One can easily determine the {\it standard geometric embedding} $\eta:\: B_m\to B_n$ in terms of Artin's presentation, as in this case $\eta(\sigma_i)=\sigma_i$ for $i=1,\ldots,m-1$.

\begin{figure}
\begin{center}
  \includegraphics{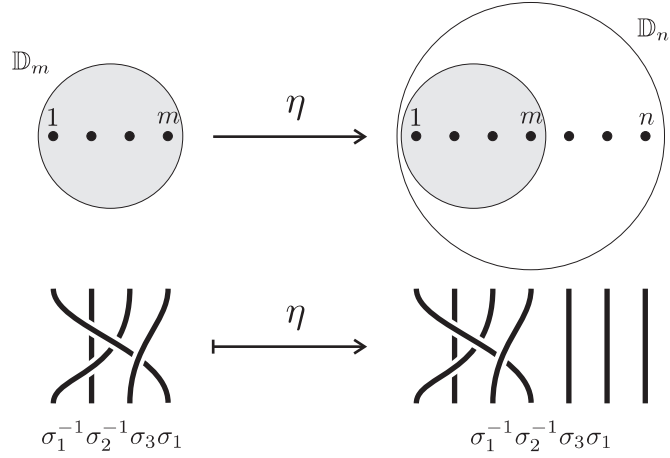}
\end{center}
\caption{The standard geometric embedding of $B_m$ into $B_n$}
\label{F:eta}
\end{figure}

Actually, every geometric embedding $\varphi$ is conjugate to $\eta$, in the following sense. First, $\eta \circ \varphi^{-1}$ sends $\varphi(\mathbb D_m)$ to $\eta(\mathbb D_m)$. Then, as $\mathbb D_n\backslash int(\varphi(\mathbb D_m))$ and $\mathbb D_n\backslash int(\eta(\mathbb D_m))$ are both annuli with $n-m$ punctures, there is a homeomorphism from the former to the latter which coincides with $\eta\circ \varphi^{-1}$ on $\partial(\varphi(\mathbb D_m))$. Gluing both homeomorphisms, we obtain an orientation preserving homeomorphism $g$ of $\mathbb D_n$ (fixing the boundary pointwise) such that $g \circ \varphi = \eta$, the standard embedding of $\mathbb D_m$ into $\mathbb D_n$. Considering $g$ as an element of $B_n$ we obtain
$$
   \begin{array}{rcl}  \eta:\: B_m & \longrightarrow & B_n \\
                        f  & \longmapsto & g \circ \varphi(f) \circ g^{-1}
   \end{array}
$$
as this is precisely the geometric embedding associated to the standard disc embedding $g\circ \varphi = \eta:\: \mathbb D_m \to \mathbb D_n$.

Now the following question arises: Do geometric embeddings merge conjugacy classes of $B_m$? In other words, given a geometric embedding $\varphi:\: B_m\to B_n$, and given two non-conjugate elements $a,b\in B_m$, can $\varphi(a)$ and $\varphi(b)$ be conjugate in $B_n$?

In this paper, we answer the above question in the negative, by showing the following.

\begin{theorem}\label{T:main}
Let $\eta:\: B_m \to B_n$ be the standard geometric embedding. For every $a,b\in B_m$, if $\eta(a)$ and $\eta(b)$ are conjugate in $B_n$, then $a$ and $b$ are conjugate in $B_m$.
\end{theorem}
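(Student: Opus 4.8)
The plan is to translate the conjugacy hypothesis into a statement about invariant curves of a single braid and then exploit the canonicity of the Nielsen--Thurston decomposition. Write $\beta=\eta(a)$ and suppose $\beta=\gamma\,\eta(b)\,\gamma\inv$ for some $\gamma\in B_n$. Let $C\subset \mathbb D_n$ be the round curve enclosing the punctures $1,\dots,m$, that is $C=\eta(\partial\mathbb D_m)$. By construction $C$ is invariant under $\beta$, the restriction of $\beta$ to the exterior annulus $\mathbb D_n\backsl int(\eta(\mathbb D_m))$ is trivial, and the braid induced by $\beta$ inside $C$ is exactly $a$. Setting $C'=\gamma(C)$, the same holds for $C'$ with $b$ in place of $a$: it is $\beta$-invariant, $\beta$ is trivial outside it, and the braid it induces inside is (conjugate to) $b$. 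Thus the theorem reduces to the following Main Lemma: if a braid $\beta\in B_n$ admits two such \emph{enclosing curves}, each surrounding $m$ punctures, each invariant with trivial exterior action, then the conjugacy classes in $B_m$ induced inside them coincide. Throughout I will use the exponent sum $e\co B_k\to\Z$, $e(\sigma_i)=1$: it is conjugacy invariant and satisfies $e(\eta(x))=e(x)$, so from $\eta(a)\sim\eta(b)$ we get $e(a)=e(b)$; this is the device that removes the central ambiguity coming from full twists $\Delta^2$.

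First I would treat the case in which $a$ is pseudo-Anosov or periodic (the case $a=1$ being trivial). Here $\beta=\eta(a)$ is reducible in $B_n$ for $m<n$, and a direct computation with the Birman--Lubotzky--McCarthy description of the canonical reduction system shows that $C$ is in fact the \emph{whole} canonical reduction system of $\beta$: inside $C$ the map is irreducible (pseudo-Anosov or periodic) so no curve survives there, while curves meeting the trivial exterior fail to be canonical. Since the canonical reduction system is equivariant, $\gamma$ must carry the canonical reduction system of $\eta(b)$ (which equals $\{C\}$ as well) to that of $\eta(a)$, forcing $\gamma(C)=C$ up to isotopy. Representing $\gamma$ by a homeomorphism preserving $C$ and restricting it to the inner disc yields an element $\bar\gamma$ of $\MCG(\mathbb D_m,\partial\mathbb D_m)/\<\Delta^2\>=B_m/\<\Delta^2\>$ conjugating $b$ to $a$ modulo the centre; hence $\delta\, b\,\delta\inv=a\,\Delta^{2k}$ for some $\delta\in B_m$ and $k\in\Z$. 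Applying $e$ and using $e(a)=e(b)$ together with $e(\Delta^2)=m(m-1)\neq0$ gives $k=0$, so $a$ and $b$ are conjugate in $B_m$.

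The remaining, and genuinely harder, case is when $a$ is reducible. Now the round curve $C$ is typically \emph{not} canonical (for $a=\sigma_1^2$ with $n>m$ one sees at once that a curve enclosing an exterior puncture together with some inner ones crosses $C$ yet is fixed by a power of $\beta$), so the previous pinning-down argument is unavailable. Instead I would show that the canonical reduction system of $\eta(a)$ equals $\eta(\sigma(a))$, the image of the canonical reduction system of $a$: curves crossing $\sigma(a)$ are moved by $\beta$ and hence excluded, while the extra exterior punctures add no new canonical curves. Cutting $\mathbb D_n$ along the outermost curves of this system collapses each outermost $\beta$-piece to a single puncture and exhibits $\eta(a)$ as built from an \emph{outer component}, which is the standard embedding of a pseudo-Anosov or periodic braid $\hat a$ into a disc with fewer punctures, together with braids $a_1,\dots,a_r$ sitting inside the tubes. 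Because the canonical reduction system is $\gamma$-equivariant, $\gamma$ matches the tubes of $\eta(b)$ with those of $\eta(a)$ and the outer component of $\eta(b)$ with that of $\eta(a)$; this produces, on one hand, a conjugacy between the standardly embedded outer components (an instance of the previous case, since these components are irreducible), and on the other, direct conjugacies $a_j\sim b_{\pi(j)}$ inside matched tubes. Reassembling these conjugacies along the tube structure recovers a conjugacy $a\sim b$ in $B_m$.

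I expect the reassembly in this last step to be the main obstacle. The content is that conjugacy of reducible braids is governed by the conjugacy classes of their Nielsen--Thurston pieces \emph{plus} the integer Dehn-twist coefficients recording how each tube is glued to the outer component; one must check that adjoining the trivial exterior strands changes none of the inner gluing data, so that the data matched by $\gamma$ for the pair $\eta(a),\eta(b)$ is exactly the data needed to conjugate $a$ and $b$ in $B_m$. The only coefficient not automatically matched is the global one at the outermost interface, and this is precisely what the exponent-sum identity $e(a)=e(b)$ controls, at the outer level and, recursively, within each tube. Making this bookkeeping precise---rather than any single conceptual difficulty---is where the real work of the proof lies.
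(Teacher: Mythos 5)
Your first case ($a$ periodic or \pA) is essentially correct, and your exponent-sum device is a genuine alternative to the paper's treatment there: the paper instead fixes $\partial(\eta(\mathbb D_m))$ pointwise and restricts, invoking injectivity of $\eta$, while you work in $B_m/\langle\Delta^2\rangle$ and use $e(a)=e(b)$ to kill the residual $\Delta^{2k}$. The problems are in your reducible case, and they are twofold. First, your computation of the canonical reduction system is false: it is not true in general that $CRS(\eta(a))=\eta(CRS(a))$. The correct statement (Lemma~\ref{L:CRS} of the paper) is that this holds only when $a_{ext}$ is trivial; when $a_{ext}$ is nontrivial, the curve $\partial(\eta(\mathbb D_m))$ is itself canonical, so $CRS(\eta(a))=\eta(CRS(a))\cup\partial(\eta(\mathbb D_m))$. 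A concrete counterexample to your claim is $a=\Delta^2\sigma_1\in B_3$: here $CRS(a)$ is the curve $c$ enclosing the first two punctures and $a_{ext}$ is the full twist on two strands; for $n>3$ the restriction of $\eta(a)$ to the complement of $\eta(c)$ is nontrivial, is not periodic (its last strand crosses no other, so periodicity would force triviality) and is not \pA\ (it preserves $\partial(\eta(\mathbb D_3))$), hence $\eta(c)$ alone fails the defining property of a reduction system and $\partial(\eta(\mathbb D_3))$ must be added. Your claim even contradicts your own structural description: you identify the outer piece as the standard embedding of an irreducible braid $\hat a$, but for $\hat a\neq 1$ such a standardly embedded braid is never periodic nor \pA---which is exactly why the extra curve is canonical. (Your example $a=\sigma_1^2$, where $C$ is indeed not canonical, is the trivial-$a_{ext}$ case, and you overgeneralized from it.)

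Second, and more seriously, the reassembly step is not bookkeeping; it is the whole difficulty, and you do not supply it. The principle you invoke---that conjugacy of reducible braids is governed by the conjugacy classes of their Nielsen--Thurston pieces together with integer twist coefficients---is unproven and, as stated, an oversimplification: piecewise conjugacies do not assemble into a global conjugator unless they are simultaneously realizable and compatible with the permutation and gluing of the tubes, and establishing such a complete conjugacy invariant for reducible braids is at least as hard as the theorem itself. The paper shows the detour is unnecessary. Since Lemma~\ref{L:CRS} gives that $\eta(b)_{ext}$ is always trivial, one corrects the given conjugator $\alpha$ (where $\alpha\,\eta(a)\,\alpha^{-1}=\eta(b)$) in the only problematic case, namely $a_{ext}$ trivial: the curve $\alpha(\partial(\eta(\mathbb D_m)))$ lies in the outermost component $S_b$ of $\mathbb D_n\setminus CRS(\eta(b))$ and cobounds with $\partial\mathbb D_n$ an annulus containing exactly $n-m$ punctures, so some homeomorphism $\beta$ supported in $S_b$ carries it back to $\partial(\eta(\mathbb D_m))$; because $\eta(b)$ acts trivially on $S_b$, $\beta$ commutes with $\eta(b)$, hence $\gamma=\beta\circ\alpha$ still conjugates $\eta(a)$ to $\eta(b)$ and now preserves $\eta(\mathbb D_m)$, and restriction to $\eta(\mathbb D_m)$ yields the conjugacy in $B_m$ at once, uniformly in the Nielsen--Thurston type of $a$. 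That commutation trick is the idea missing from your proposal; with it, your ``Main Lemma'' follows directly and no piece-by-piece matching is ever needed.
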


From the above result the following follows easily:

\begin{corollary}
Let $\varphi:\: B_m \to B_n$ be a geometric embedding. For every $a,b\in B_m$, if $\varphi(a)$ and $\varphi(b)$ are conjugate in $B_n$, then $a$ and $b$ are conjugate in $B_m$.
\end{corollary}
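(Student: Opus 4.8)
The plan is to reduce the Corollary to Theorem~\ref{T:main} by exploiting the observation, made in the paragraph just before the theorem, that every geometric embedding is \emph{conjugate} to the standard one. Concretely, for the given geometric embedding $\varphi$ one produces (exactly as in that discussion) an element $g\in B_n$ --- the mapping class of the homeomorphism obtained by gluing $\eta\circ\varphi^{-1}$ on $\varphi(\mathbb D_m)$ to a homeomorphism of the complementary $(n-m)$-punctured annulus agreeing with it on $\partial(\varphi(\mathbb D_m))$, so that $g\circ\varphi=\eta$ as disc embeddings --- with the property that
\begin{equation*}
  \eta(f) = g\,\varphi(f)\,g^{-1} \qquad \text{for all } f\in B_m.
\end{equation*}
In other words, $\eta$ and $\varphi$ differ by the inner automorphism $c_g\colon x\mapsto gxg^{-1}$ of $B_n$, that is, $\eta = c_g\circ\varphi$.

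First I would record the elementary fact that an inner automorphism of a group preserves conjugacy, i.e. it maps conjugate elements to conjugate elements and non-conjugate elements to non-conjugate elements. Then, assuming $\varphi(a)$ and $\varphi(b)$ are conjugate in $B_n$, applying $c_g$ shows that $\eta(a)=c_g(\varphi(a))$ and $\eta(b)=c_g(\varphi(b))$ are conjugate in $B_n$ as well. Explicitly, if $h\in B_n$ satisfies $h\,\varphi(a)\,h^{-1}=\varphi(b)$, then conjugating by $g$ gives $(ghg^{-1})\,\eta(a)\,(ghg^{-1})^{-1}=\eta(b)$. At this point Theorem~\ref{T:main} applies to the standard embedding $\eta$ and yields that $a$ and $b$ are conjugate in $B_m$, which is exactly the assertion of the Corollary.

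I do not expect a genuine obstacle here, since the entire difficulty of the statement has already been absorbed into Theorem~\ref{T:main}; the remaining argument is purely formal. The only point that requires a little care is the existence of $g\in B_n$ with $\eta=c_g\circ\varphi$, but this is precisely the content of the paragraph preceding the theorem, so I would simply invoke it rather than reprove it. Granting that construction, the Corollary follows immediately.
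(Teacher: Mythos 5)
Your proposal is correct and follows exactly the paper's own argument: both invoke the element $g\in B_n$ with $\eta = c_g\circ\varphi$ constructed in the paragraph before Theorem~\ref{T:main}, transfer the conjugating element $h$ to $ghg^{-1}$ to see that $\eta(a)$ and $\eta(b)$ are conjugate, and then apply Theorem~\ref{T:main}. No gaps; the argument is purely formal, as you note.
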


\begin{proof}
Recall that $\eta$ is the geometric embedding corresponding to the disc embedding $g\circ \varphi$ for some homeomorphism $g$ of $\mathbb D_n$. If there exists $\alpha\in B_n$ such that $\varphi(b)=\alpha \varphi(a)\alpha^{-1}$, then $\eta(b)= g\varphi(b)g^{-1} = \left(g \alpha g^{-1}\right)\left( g \varphi(a)g^{-1}\right)\left(g \alpha^{-1} g^{-1}\right) = \left(g \alpha g^{-1}\right)\eta(a)\left(g \alpha g^{-1}\right)^{-1}$. Thus $\eta(a)$ and $\eta(b)$ are conjugate, hence so are $a$ and $b$ by Theorem~\ref{T:main}.
\end{proof}

In order to show Theorem~\ref{T:main}, in Section 2 we will recall some basic facts from Nielsen-Thurston theory of braids and mapping classes, which will be used in Section 3 to prove the result.

As several arguments in the proof of Theorem~\ref{T:main} use the general theory of mapping class groups, one is tempted to try to generalize the result to geometric embeddings of other mapping class groups. That is, if $M$ is a (possibly punctured) surface and $N$ is a subsurface such that $\partial N\cap \partial M=\emptyset$, the injection $i:\: N\hookrightarrow M$ induces a map $i_*:\: MCG(N,\partial N) \rightarrow MCG(M,\partial M)$ which in most cases is injective~\cite{PR}. Here is an example of such a geometric embedding which merges conjugacy classes.

\begin{example}
Let $M_{2,1}$ be the orientable surface of genus 2 with 1 boundary component, and let $M_{3,0}$ be the orientable closed surface of genus 3. Consider the geometric embedding $i:\: M_{2,1} \hookrightarrow M_{3,0}$ represented in Figure~\ref{F:surfaceEmbedding}. The induced map $i_*:\: MCG(M_{2,1},\partial M_{2,1}) \rightarrow MCG(M_{3,0})$ is injective as $\overline{M_{3,0}\backslash M_{2,1}}$ is neither a disc nor a punctured disc~\cite{PR}.

Let $\gamma$ and $\delta$ be two simple closed curves in $M_{2,0}$ as in Figure~\ref{F:surfaceEmbedding}, and let $\tau_{\gamma}$ and $\tau_{\delta}$ be their corresponding Dehn twists.

First we see that $\tau_\gamma$ is not conjugate to $\tau_\delta$ in $MCG(M_{2,1}, \partial M_{2,1})$, as follows: Collapsing the boundary of $M_{2,1}$ to a puncture $p$ yields a group homomorphism from $MCG(M_{2,1}, \partial M_{2,1})$ to $MCG(M_{2,0}\backslash\{p\})$. The image of $\tau_\delta$ is trivial, while the image of $\tau_\gamma$ is not, hence these elements cannot be conjugate.

Now notice that the mapping class $\rho\in MCG(M_{3,0})$ represented by a rotation of angle $2\pi/3$ sends the curve $i(\gamma)$ to the curve $i(\delta)$. This means that, in $MCG(M_{3,0})$, one has $\rho \circ i_*(\tau_{\gamma})\circ  \rho^{-1} = i_*(\tau_\delta)$. Hence $i_*$ merges conjugacy classes.
\end{example}

\begin{figure}
\begin{center}
  \includegraphics{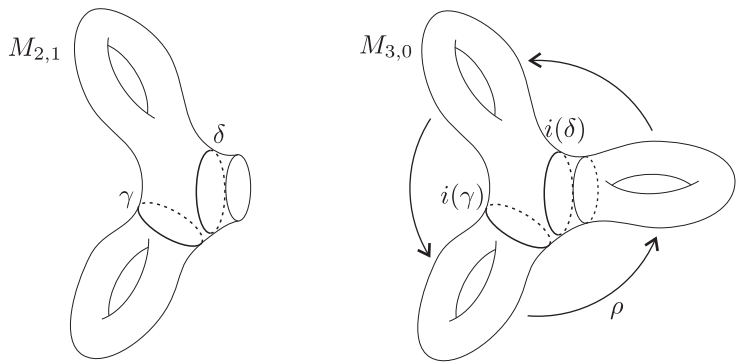}
\end{center}
\caption{A geometric embedding which merges conjugacy classes}
\label{F:surfaceEmbedding}
\end{figure}

\section{Some facts from Nielsen-Thurston theory}

Recall that the braid group $B_n$ can be defined as $B_n=MCG(\mathbb D_n,\partial \mathbb D_n)$. There is a special braid, denoted $\Delta^2$, which corresponds to a Dehn twist along a curve which is parallel to the boundary of $\mathbb D_n$. This braid is called a {\it full twist}, or the {\it Garside element} of $B_n$, and it generates the center of $B_n$~\cite{Chow}.

If we quotient $B_n$ by its center, that is if we consider $B_n/\langle \Delta^2\rangle$, geometrically this corresponds to removing the condition that the boundary of $\mathbb D_n$ is fixed pointwise, so $B_n/\langle \Delta^2\rangle = MCG(\mathbb D_n)$. Notice that we could collapse the boundary of $\mathbb D_n$ to a new puncture, so this group becomes a subgroup of the mapping class group of the $(n+1)$-times punctured sphere.

This allows to study braids using the Nielsen-Thurston theory of mapping classes~\cite{Thurston,FLP}, so braids can be classified into three  geometric types:
\begin{itemize}

 \item {\bf Periodic:} If they have finite order modulo $\Delta^2$.

 \item {\bf Reducible, not periodic:} If they are not periodic, and they preserve a family of disjoint, non-parallel simple closed curves in $\mathbb D_n$, each one enclosing more than 1 and less than $n$ punctures (such curves are called {\it non-degenerate}).

 \item {\bf pseudo-Anosov:} If the induced mapping class of the $(n+1)$-times punctured sphere admits a representative which preserves two transverse measured foliations, scaling the measure of one of them by some real number $\lambda>0$, and the measure of the second one by $\lambda^{-1}$.

\end{itemize}

One can interpret this classification as follows: Periodic braids have a power which preserves {\it all} non-degenerate curves. Reducible (non-periodic) braids have a power which preserves {\it some} non-degenerate curves. Pseudo-Anosov braids have no power preserving a non-degenerate curve.

The most important case in this paper will be the second one: Reducible (but not periodic) braids. By definition, these braids preserve a family of {\it non-degenerate} curves. By~\cite{BLM}, there is a distinguished family of curves associated to a reducible, non periodic braid $\beta$, called the {\it canonical reduction system} of $\beta$ and denoted $CRS(\beta)$.

The canonical reduction system $CRS(\beta)$ of a given braid $\beta$ (not necessarily reducible) is defined as follows. A simple closed curve $\mathcal C$ is said to be {\it essential} for $\beta$ if $\beta^N(\mathcal C)=\mathcal C$ for some $N>0$, and if $\mathcal C'$ is not fixed by any power of $\beta$ for every $\mathcal C'$ having positive geometric intersection with $\mathcal C$ (that is, $\mathcal C'$ cannot be isotoped to be disjoint from $\mathcal C$). Then $CRS(\beta)$ is the set of (isotopy classes of) essential curves for $\beta$.

Some important facts concerning canonical reduction systems are the following~\cite{BLM}:
\begin{itemize}

\item $CRS(\beta)\neq \emptyset$ if and only if $\beta$ is reducible, not periodic.

\item $CRS(\alpha\circ \beta \circ \alpha^{-1}) = \alpha(CRS(\beta))$.

\item $\mathbb D_n \backslash CRS(\beta)$ has several connected components. Given a power $\beta^M$ of $\beta$ which preserves each of these connected components (the permutation induced by $\beta^M$ on the set of connected components is trivial), the restriction of $\beta^M$ to each connected component is either periodic or pseudo-Anosov.

\item $CRS(\beta)$ is the minimal family, under inclusion, of (isotopy classes of) non-degenerate curves satisfying the above property. In other words, a family of curves which satisfies the above property is $CRS(\beta)$ if and only if the family obtained by removing any of its curves does not satisfy the property.

\end{itemize}

One particular property of braids, compared to other mapping classes, is that cutting a punctured disc along disjoint non-degenerate curves yields again punctured discs (each curve plays the role of boundary of the outermost connected component it encloses). This implies that each reducible (non-periodic) braid $\beta\in B_n$ can be decomposed into simpler braids on fewer strands, cutting the disc along $CRS(\beta)$. Usually, in the theory of mapping classes, one takes a power $\beta^M$ such that each connected component is sent to itself, but in the case of braids this is not necessary, as one can see in~\cite{GM}.

In any case, if $\beta\in B_n$ is reducible and non-periodic, the outermost connected component of $\mathbb D_n \backslash CRS(\beta)$ is always preserved by $\beta$, and the restriction of $\beta$ to this component is a braid that we will call $\beta_{ext}$, the external component of $\beta$.

We point out that $\beta_{ext}$ is well defined up to conjugacy, as it depends on the way of collapsing the {\it holes} of the surface to punctures. Although there is a way to properly define the external and internal components of $\beta$ (see~\cite{GM}), we shall not need it, as the only property of $\beta_{ext}$ we care about in this paper is whether it is trivial or not, and this is well defined up to conjugacy.

We will use the above results in the next section to show Theorem~\ref{T:main}.

\section{The standard geometric embedding does not merge conjugacy classes}

 In this section we will show Theorem~\ref{T:main}. Let $\eta: B_m\to B_n$ be the standard geometric embedding, and let $a,b\in B_m$ such that $\eta(a)$ and $\eta(b)$ are conjugate.

If $a=1$ the result follows easily: one has $\eta(a)=1$ which implies that $\eta(b)=1$, and as $\eta$ is injective one obtains that $b=1$ too. We can then assume that both $a$ and $b$ are nontrivial.

First we must point out that $\eta(a)$ preserves the boundary of $\eta(\mathbb D_m)$, which is a non-degenerate curve of $\mathbb D_n$ hence $\eta(a)$ is not pseudo-Anosov. We shall now see that $\eta(a)$ cannot be periodic.

If $\eta(a)$ were periodic, there would be some integers $M,N$ with $N>0$ such that $\eta(a)^N=\Delta^{2M}$. As $\Delta$ is a braid in which every strand crosses every other strand once in the positive sense, $\Delta^{2M}$ is a braid in which every strand crosses every other strand, unless $M=0$. But in our case the $n$-th strand of $\eta(a)^N$ would not cross any other, so we deduce that $M=0$, hence $\eta(a)^N=1$. As braid groups have no torsion~\cite[Theorem 8]{FaN} (see also~\cite{FoN,Dyer,GM_survey}), it would follow that $\eta(a)=1$, so $a=1$, a contradiction. Therefore $\eta(a)$ is not periodic.

From the above arguments we obtain that $\eta(a)$ is reducible, non periodic. Hence $CRS(\eta(a))\neq \emptyset$. We shall now compare $CRS(\eta(a))$ and $CRS(a)$, the latter being possibly empty, as $a$ can be either periodic, reducible or pseudo-Anosov.

\begin{figure}
\begin{center}
  \includegraphics{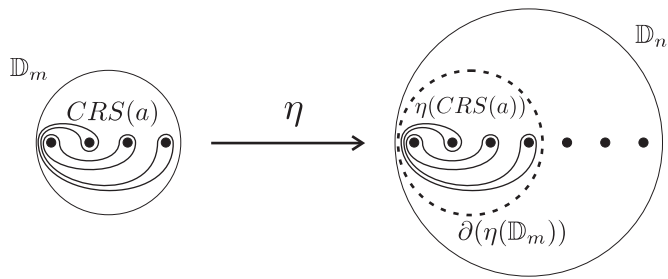}
\end{center}
\caption{The canonical reduction system of $\eta(a)$ is obtained by applying $\eta$ to $CRS(a)$, and possibly adding $\partial(\eta(\mathbb D_m))$.}
\label{F:CRS}
\end{figure}

\begin{lemma}\label{L:CRS}
Under the above conditions, if $a_{ext}$ is trivial, then $CRS(\eta(a))=\eta(CRS(a))$. Otherwise $CRS(\eta(a))=\eta(CRS(a))\cup \partial(\eta(\mathbb D_m))$. Moreover, in either case $\eta(a)_{ext}$ is trivial.
\end{lemma}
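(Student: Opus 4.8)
The plan is to identify $CRS(\eta(a))$ directly, using the characterization of the canonical reduction system stated above as the unique minimal family of non-degenerate curves along which $\eta(a)$ decomposes into periodic and pseudo-Anosov pieces. Write $\mathbb D_n$ as the union of $\eta(\mathbb D_m)$, on which $\eta(a)$ acts as $a$, and the outer collar $A=\mathbb D_n\backslash int(\eta(\mathbb D_m))$ carrying the last $n-m$ punctures, on which $\eta(a)$ is the identity. I take as candidate the family $\Gamma=\eta(CRS(a))$ when $a_{ext}$ is trivial, and $\Gamma=\eta(CRS(a))\cup\partial(\eta(\mathbb D_m))$ when $a_{ext}$ is nontrivial (recall that when $a$ is periodic or pseudo-Anosov one has $CRS(a)=\emptyset$ and $a_{ext}=a\neq 1$). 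All curves of $\Gamma$ are preserved by $\eta(a)$ and non-degenerate in $\mathbb D_n$, since the curves of $CRS(a)$ enclose between $2$ and $m-1$ punctures while $\partial(\eta(\mathbb D_m))$ encloses $m$ of the $n$ punctures.

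First I would check the reduction property, that $\eta(a)$ restricts to a periodic or pseudo-Anosov map on each component of $\mathbb D_n\backslash\Gamma$. Every component other than the outermost one is contained in $\eta(\mathbb D_m)$ and coincides with a component of $\mathbb D_m\backslash CRS(a)$; there $\eta(a)$ agrees with $a$ and is periodic or pseudo-Anosov by the defining property of $CRS(a)$. The outermost component is where the two cases diverge. When $a_{ext}$ is trivial it is the union of the outermost piece of $\mathbb D_m\backslash CRS(a)$ (where, after capping the holes by once-punctured discs so that the boundary twists die, $a$ restricts to the identity precisely because $a_{ext}$ is trivial) with the collar $A$ (where $\eta(a)$ is the identity), so $\eta(a)$ is trivial there. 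When $a_{ext}$ is nontrivial, the retained curve $\partial(\eta(\mathbb D_m))$ cuts this region into the outermost piece of $\mathbb D_m\backslash CRS(a)$, on which $\eta(a)$ acts as the periodic or pseudo-Anosov map $a_{ext}$, and the collar $A$, on which it is the identity. Thus the reduction property holds in both cases.

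Next I would verify minimality, that removing any curve of $\Gamma$ destroys this property. Removing a curve of $\eta(CRS(a))$ merges two components; if both lie inside $\eta(\mathbb D_m)$ this is a forbidden merge for $a$, so the restriction is reducible and non-periodic by the minimality of $CRS(a)$. If the removed curve $\eta(\mathcal C)$ is outermost in $\eta(CRS(a))$, the merged component also swallows the collar $A$, but it still carries a preserved non-degenerate curve (namely $\eta(\mathcal C)$ itself) inside which $\eta(a)$ acts nontrivially and outside which it is trivial, whence the restriction is reducible and non-periodic. Finally, in the case $a_{ext}\neq 1$, removing $\partial(\eta(\mathbb D_m))$ merges the outermost $\mathbb D_m$-piece, on which $\eta(a)$ acts nontrivially as $a_{ext}$, with the collar $A$ on which it is the identity; this map preserves the non-degenerate curve $\partial(\eta(\mathbb D_m))$, hence is reducible, and it is non-periodic. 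This yields $\Gamma=CRS(\eta(a))$, which is the first assertion. For the last assertion, note that in either case the outermost component of $\mathbb D_n\backslash CRS(\eta(a))$ contains the collar $A$ and lies entirely in the locus where $\eta(a)$ is trivial, so $\eta(a)_{ext}$ is trivial.

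The main obstacle I anticipate is the minimality step, specifically ruling out that a component which has absorbed the trivial collar $A$ could be periodic; it is plainly not pseudo-Anosov, since it retains a preserved non-degenerate curve (either $\partial(\eta(\mathbb D_m))$ or an outermost curve of $\eta(CRS(a))$). To exclude periodicity I would reuse the argument already given for $\eta(a)$ itself: a nontrivial periodic braid has a power equal to a nonzero power of the full twist $\Delta^2$, in which every strand crosses every other, but the strands coming from the punctures of $A$ never cross any strand under $\eta(a)$, so that exponent must vanish and the restriction would be trivial, contradicting that it acts nontrivially inside. The other point needing care, flagged above, is that ``$a_{ext}$ trivial'' has to be read after capping the curves of $CRS(a)$ with once-punctured discs, so that Dehn twists around those curves do not contribute spuriously to the action on the outermost component.
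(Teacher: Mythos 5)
Your overall strategy is the paper's: invoke the characterization of the canonical reduction system (reduction property plus minimality under removal of curves), verify the reduction property for a candidate family, then check that no curve can be removed. The difference is organizational, and it is exactly this difference that creates a gap. The paper takes as its initial family $\Gamma=\eta(CRS(a))\cup\partial(\eta(\mathbb D_m))$ in \emph{both} cases, deduces $CRS(\eta(a))\subseteq\Gamma$, and only afterwards asks which curves can be deleted. The payoff is that when a curve $\mathcal C\in\eta(CRS(a))$ is deleted from $\Gamma$, the boundary curve is still present to separate off the collar, so the two merged components both lie inside $\eta(\mathbb D_m)$; the restriction of $\eta(a)^M$ to the merged piece is literally the restriction of $a^M$ to the corresponding merged piece of $\mathbb D_m$, and the minimality of $CRS(a)$ applies verbatim. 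In your version, in the case $a_{ext}=1$ the candidate family omits $\partial(\eta(\mathbb D_m))$, so deleting an outermost curve $\eta(\mathcal C)$ produces a merged component that swallows the collar $A$, and you can no longer quote the minimality of $CRS(a)$ directly.

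In that configuration you exclude pseudo-Anosov correctly (via the preserved curve $\eta(\mathcal C)$), and you exclude periodicity with the strand-crossing/torsion-freeness argument, but that argument only shows: periodic $\Rightarrow$ trivial. To finish, you assert that $\eta(a)$ ``acts nontrivially inside $\eta(\mathcal C)$,'' and this is the gap: it is not obvious, since all that is known about the restriction of $a^M$ to the component just inside $\mathcal C$ is that it is periodic or pseudo-Anosov, and ``periodic'' includes ``trivial.'' The claim is true, but it needs an argument, for instance: if that interior restriction were trivial, then---since $a_{ext}$ is also trivial, and a braid preserving $\mathcal C$ is determined by its external braid together with its interior braid rel $\mathcal C$---the restriction of $a^M$ to the component of $\mathbb D_m\setminus\left(CRS(a)\setminus\{\mathcal C\}\right)$ created by deleting $\mathcal C$ would be trivial, hence periodic, contradicting the minimality of $CRS(a)$. (Equivalently: run your strand argument to conclude the whole merged restriction would be trivial, restrict that triviality back to the part inside $\eta(\mathbb D_m)$, and reach the same contradiction.) With this supplement your proof closes; without it, the minimality verification in the $a_{ext}=1$ case is incomplete. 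The paper's choice of always carrying $\partial(\eta(\mathbb D_m))$ in $\Gamma$ is precisely what prevents this issue from ever arising there.
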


\begin{proof}
Recall that $\eta(a)$ is obtained by extending the homeomorphism $a$ of $\mathbb D_m$ by the identity to the whole $\mathbb D_n$, where $\mathbb D_m$ is embedded into $\mathbb D_n$ by the standard embedding $\eta$. This implies in particular that $\partial(\eta(\mathbb D_m))$ is preserved by $\eta(a)$.

Since $CRS(a)$ is preserved by $a$, if we apply the standard disc embedding $\eta$ to this (possibly empty) family of curves, we obtain $\eta(CRS(a))$ which is preserved by $\eta(a)$. Hence $\Gamma = \eta(CRS(a)) \cup \partial(\eta(\mathbb D_m))$ is a family of disjoint, simple, non-parallel, non-degenerate curves preserved by $\eta(a)$. Moreover, we claim that if $\eta(a)^M$ preserves the connected components of $\mathbb D_n\backslash \Gamma$, then $\eta(a)^M$ restricted to each connected component is either periodic or pseudo-Anosov. Indeed, $\eta(a)^M$ restricted to the outermost component of $\mathbb D_n\backslash \Gamma$ is the identity, while $\eta(a)^M$ restricted to any other component is equal to $a^M$ restricted to a component of $\mathbb D_m\backslash CRS(a)$, hence it is either periodic or pseudo-Anosov.

Therefore $CRS(\eta(a))$ is a subset of $\Gamma$, and we only need to see which curves of $\Gamma$ can be removed so that the restriction of $\eta(a)^M$ to each connected component is still either periodic or pseudo-Anosov.

Given $\mathcal C \in \eta(CRS(a))$, removing $\mathcal C$ from $\Gamma$ merges two connected components. That is, if we consider $\Gamma'=\Gamma\backslash \{\mathcal C\}$, there is a connected component of $\mathbb D_n \backslash \Gamma'$ which is the union of $\mathcal C$ and of two connected components of $\mathbb D_n \backslash \Gamma$. If the restriction of $\eta(a)^M$ to this component were periodic or pseudo-Anosov, this would imply that $\eta^{-1}(\mathcal C)$ can be removed from $CRS(a)$, which is not true. Hence all curves of $\eta(CRS(a))$ belong to $CRS(\eta(a))$, so the only curve which could possibly be removed from $\Gamma$ to obtain $CRS(\eta(a))$ is $\partial(\eta(\mathbb D_m))$.

Now suppose that $a_{ext}$ is trivial. Then the restriction of $\eta(a)$ (and of every power of $\eta(a)$) to the outermost component of $\mathbb D_n\backslash \eta(CRS(a))$ is also trivial. Hence the restrictions of $\eta(a)^M$ to the connected components of  $\mathbb D_n\backslash \eta(CRS(a))$ are either periodic or pseudo-Anosov, therefore $CRS(\eta(a))=\eta(CRS(a))$.

Finally, suppose that $a_{ext}$ is not trivial. Let $S$ be the outermost component of $\mathbb D_n\backslash \eta(CRS(a))$, and consider $\eta(a)^M$ restricted to $S$. This mapping class is still a braid, as $S$ is (homeomorphic to) a punctured disc. Notice that $(\eta(a)^M)_{|_S}$ is not periodic: as the last puncture of $\mathbb D_n$ does not cross any other, if it were periodic it should be trivial, but this is not the case. Also, $(\eta(a)^M)_{|_S}$ is not pseudo-Anosov, as it preserves the non-degenerate curve $\partial(\eta(\mathbb D_m))$. Hence we cannot remove $\partial(\eta(\mathbb D_m))$ from $\Gamma$ in this case, and therefore $CRS(\eta(a))=\Gamma$.

In both cases the restriction of $\eta(a)$ to the outermost component of $\mathbb D_n\backslash CRS(\eta(a))$ is trivial, that is $\eta(a)_{ext}$ is trivial. This ends the proof of Lemma~\ref{L:CRS}.
\end{proof}

Now suppose that $\eta(a)$ and $\eta(b)$ are conjugate in $B_n$, and let $\alpha\in B_n$ such that $\alpha\: \eta(a)\: \alpha^{-1} = \eta(b)$. Then we have $CRS(\eta(b))= \alpha(CRS(\eta(a)))$.

By Lemma~\ref{L:CRS}, $CRS(\eta(a))$ has a curve enclosing $m$ punctures if and only if $a_{ext}$ is trivial. Therefore $a_{ext}$ is trivial if and only if so is $b_{ext}$.

Let $CRS_{out}(\eta(a))$ be the set of outermost curves in $CRS(\eta(a))$, and define $CRS_{out}(\eta(b))$ similarly. As nested curves preserve their relative position under the homeomorphism $\alpha$, it follows that $CRS_{out}(\eta(b))= \alpha(CRS_{out}(\eta(a)))$.

Let us denote by $S_a$ the outermost connected component of $\mathbb D_n\backslash CRS(\eta(a))$ and by $S_b$ the outermost connected component of $\mathbb D_n\backslash CRS(\eta(b))$. As $\alpha$ sends $\mathcal CRS_{out}(\eta(a))$ to $\mathcal CRS_{out}(\eta(b))$, we have $\alpha(S_a)=S_b$.

 Suppose that $a_{ext}$ (and thus $b_{ext}$) is nontrivial. In this case $S_a=S_b=\mathbb D_n\backslash \eta(\mathbb D_m)$ so $\alpha$ preserves $\eta(\mathbb D_m)$, and we can clearly take a representative of $\alpha$ which fixes $\partial(\eta(\mathbb D_m))$ pointwise. Let $\alpha'$ be the restriction of this representative to $\eta(\mathbb D_m)$, and let $\alpha_0=\eta^{-1}(\alpha')\in B_m$.

Let us see that in this case $b = \alpha_0\: a \: \alpha_0^{-1}$. Indeed,
$$
 b= \eta^{-1}(\eta(b)_{|_{\eta(\mathbb D_m)}})
 = \eta^{-1}((\alpha \:\eta(a)\: \alpha^{-1})_{|_{\eta(\mathbb D_m)}}) =
$$
$$
 = \eta^{-1}(\alpha'\: \eta(a)_{|_{\eta(\mathbb D_m)}}\: (\alpha')^{-1}) = \alpha_0 \: \eta^{-1}(\eta(a)_{|_{\eta(\mathbb D_m)}})\: \alpha_0^{-1} = \alpha_0\: a\: \alpha_0^{-1}.
$$

Now suppose that $a_{ext}$ (and thus $b_{ext}$) is trivial. This implies by Lemma~\ref{L:CRS} that all curves in $CRS(\eta(a))$ and $CRS(\eta(b))$ belong to the interior of $\eta(\mathbb D_m)$. Hence $\partial(\eta(\mathbb D_m))$ belongs to the interior of both $S_a$ and $S_b$. Let $\mathcal C_1=\partial(\eta(\mathbb D_m))$ and $\mathcal C_2 = \alpha(\mathcal C_1)$. Recall that $S_b$ is isomorphic to a punctured disc, and that $\mathcal C_1$ and $\mathcal C_2$ are interior closed simple curves of $S_b$. Moreover, as the annulus determined by $\mathcal C_1$ (resp. $\mathcal C_2$) and $\partial \mathbb D_n$ contains exactly $n-m$ punctures, there is a homeomorphism of $S_b\cup CRS_{out}(\eta(b))$ which sends $\mathcal C_2$ to $\mathcal C_1$, and leaves $CRS_{out}(\eta(b))$ and $\partial \mathbb D_n$ fixed pointwise. Extending this homeomorphism by the identity to the whole $\mathbb D_n$, we obtain a homeomorphism $\beta$ of $\mathbb D_n$ with support in $S_b$, such that $\beta(\mathcal C_2)=\mathcal C_1$.

If we consider $\gamma = \beta \circ \alpha$, we notice that $\gamma(\mathcal C_1) = \beta(\alpha(\mathcal C_1)) = \beta(\mathcal C_2) = \mathcal C_1$. That is $\gamma(\partial (\eta(\mathbb D_m))) = \partial(\eta(\mathbb D_m))$. Hence $\gamma(\eta(\mathbb D_m))=\eta(\mathbb D_m)$, and we can take a representative of $\gamma$ which fixes $\partial(\eta(\mathbb D_m))$ pointwise. Moreover, $\beta$ and $\eta(b)$ have disjoint support, hence they commute. Therefore:
$$
  \gamma\: \eta(a) \gamma^{-1} = \beta \: \alpha \: \eta(a)\: \alpha^{-1} \: \beta^{-1} =
  \beta\: \eta(b) \: \beta^{-1} = \eta(b).
$$
We then have a braid $\gamma\in B_n$ which conjugates $\eta(a)$ to $\eta(b)$ and preserves $\eta(\mathbb D_m)$. Denoting $\gamma'=\gamma_{|_{\eta(\mathbb D_m)}}$, and $\gamma_0= \eta^{-1}(\gamma')\in B_m$, it follows as above that $\gamma_0\: a \: \gamma_0^{-1} = b$.

This shows that in every case, $a$ and $b$ are conjugate in $B_m$, finishing the proof of Theorem~\ref{T:main}.

\noindent {\bf Acknowledgements:} It is a pleasure for me to write this paper at the occasion of the 70th birthday of Professor Gonz\'alez-Acu\~na. The question we address in this paper was raised to me by Patrick Dehornoy, and some time later by Ivan Marin. I thank both of them, and I specially thank Ivan Marin for encouraging me to write this paper. I also thank Bert Wiest for some interesting comments, and for sending me an alternative proof of the main result using just properties of braid strands. Part of this work was done during a stay at the Centre de Recerca Matem\`atica (CRM) in Bellaterra (Barcelona, Spain), to which I thank for its hospitality.

\begin{tabular}{l}
Juan Gonz\'alez-Meneses \\  \\
Departamento de \'Algebra \\
Facultad de Matem\'aticas \\
Instituto de Matem\'aticas (IMUS)\\
Universidad de Sevilla  \\
Apdo. 1160   \\
41080 Sevilla (Spain)  \\
{\it meneses@us.es}
\end{tabular}

\end{document}